\title{Uniform Martin's conjecture, locally}
\author{Vittorio Bard}
\date{July 23, 2019}
\theoremstyle{plain}
\newtheorem{prop}{Proposition}[section]
\newtheorem{lem}[prop]{Lemma}
\newtheorem{thm}[prop]{Theorem}
\newtheorem{coroll}[prop]{Corollary}
\newtheorem{defi}[prop]{Definition}
\newtheorem{conj}[prop]{Conjecture}
\newtheorem{fact}[prop]{Fact}
\newtheorem{question}[prop]{Question}
\theoremstyle{definition}
\newtheorem{notation}[prop]{Notation}
\newtheorem{oss}[prop]{Remark}
\theoremstyle{remark}
\newcommand{\virg}[1]{``#1''}
\newcommand{\apo}[1]{`#1'}
\DeclareMathOperator{\dom}{dom}
\DeclareMathOperator{\id}{id}
\newcommand{\rs}{\upharpoonright}
\renewcommand{\phi}{\varphi}
\newcommand{\RR}{\mathbb R}
\newcommand{\NN}{\mathbb{N}}
\newcommand{\Oo}{\mathcal O}
\newcommand{\can}{{2^\NN}}
\newcommand{\ax}[1]{$\mathsf{#1}$}
\newcommand{\ce}{{\mathit{ce}}}
\newcommand{\cee}[1]{{\mathit{ce},#1}}
\newcommand{\ca}{2^{\NN\times\NN}}
\newcommand{\er}{\mathrm{ER}}
\newcommand{\aj}[1]{{#1}^{(\omega)}}
\newcommand{\ta}{\mathbin{^s\odot_T}}
\newcommand{\tc}{\odot_T}
\newcommand{\tcx}{\odot_{T,\bm x}}
\newcommand{\tax}{\mathbin{^s\odot_{T,\bm x}}}
\newcommand{\tcy}{\odot_{T,\bm y}}
\newcommand{\tay}{\mathbin{^s\odot_{T,\bm y}}}
\newcommand{\tca}{\odot_{T,A}}
\newcommand{\ap}[1]{\approx_T^{#1}}
\newcommand{\dt}{\mathop{\downarrow_T}}
\begin{document}

\maketitle
\begin{abstract}
    We show that part I of uniform Martin's conjecture follows from a local phenomenon, namely that if a non-constant Turing invariant function goes from the Turing degree $\boldsymbol x$ to the Turing degree $\boldsymbol y$, then $\boldsymbol x \le_T \boldsymbol y$. Besides improving our knowledge about part I of uniform Martin's conjecture (which turns out to be equivalent to Turing determinacy), the discovery of such local phenomenon also leads to new results that did not look strictly related to Martin's conjecture before. In particular, we get that computable reducibility $\le_c$ on equivalence relations on $\NN$ has a very complicated structure, as $\le_T$ is Borel reducible to it. We conclude raising the question \emph{Is part II  of uniform Martin's conjecture implied by local phenomena, too?} and briefly indicating a possible direction.
\end{abstract}
\section{Introduction to Martin's conjecture}
Providing evidence for the intricacy of the structure $(\mathcal D,\le_T)$ of Turing degrees has been arguably one of the main concerns of computability theory since the mid '50s, when the celebrated priority method was discovered. However, some have pointed out that if we restrict our attention to those Turing degrees that correspond to relevant problems occurring in mathematical practice, we see a much simpler structure: such \virg{natural} Turing degrees appear to be well-ordered by $\le_T$, and there seems to be no \virg{natural} Turing degree strictly between a \virg{natural} Turing degree and its Turing jump. Martin's conjecture is a long-standing open problem whose aim was to provide a precise mathematical formalization of the previous insight. The leading idea is to formalize the notion of \virg{natural} Turing degree as a suitable equivalence class of \virg{definable} functions over Turing degrees.

A function $f:A\to\can$, where $A\subseteq\can$, is said to be \textbf{Turing invariant} (abbreviated \textbf{TI}) if, for all $x,y\in A$ one has
\[
x\equiv_T y \implies f(x)\equiv_T f(y),
\]
whereas it is said to be \textbf{order-preserving} (abbreviated \textbf{OP}) if, for all $x,y\in A$,
\[
x\le_T y \implies f(x)\le_T f(y).
\]
The intuition behind Martin's conjecture is that \virg{natural} Turing degrees are supposed to induce, by relativization, \virg{definable} TI functions and, vice versa, \virg{definable} TI functions are supposed to come from this process of relativizing some \virg{natural} Turing degree. Precisely, \apo{definable} is formalized setting Martin's conjecture under the Axiom of Determinacy (\ax{AD}). 

Recall that, in this context, upward Turing cones, i.e.\ sets of the form
\[
\Set{x\in\can|x\ge_T z},
\]
are usually referred to just as \textbf{cones}. Also recall that $A\subseteq\can$ is said to be Turing-invariant if it is closed under $\equiv_T$.
Turing Determinacy (\ax{TD}) denotes the statement that every Turing-invariant subset of $\can$ either contains a cone or is disjoint from a cone.
Martin's celebrated cone theorem \cite{martin1968} states that \ax{TD} follows from \ax{AD}. The importance of \ax{TD} lies in the fact that it enables to define a natural notion of \emph{largeness} of Turing-invariant sets: the map
\[
\mu(A)=\begin{cases}
1 &\text{if $A$ contains a cone}\\
0 &\text{otherwise}
\end{cases}
\]
defines, under \ax{TD}, a measure on the $\sigma$-algebra of Turing-invariant subsets of $\can$. 

Recall the definition of the Turing jump of $x\in\can$:
\[
x'(n)=\begin{cases}
1&\text{if $n\in\dom(\phi_n^x)$,}\\
0&\text{otherwise.}
\end{cases}
\]
Finally, given two TI functions $f,g:\can\to\can$, one defines
\[
f\le_M g\iff f(x)\le_T g(x)\text{ on a cone.}
\]

\begin{conj}[Martin]\thlabel{mc}
In \ax{ZF+DC+AD}, the following are conjectured:
\begin{enumerate}[\upshape I. ]
    \item if $f:\can\to\can$ is Turing invariant, then either $f(x)\ge_T x$ on a cone or there exists $y\in\can$ such that $f(x)\equiv_T y$ on a cone;
    \item the set of TI functions $f$ such that $f\ge_M\id_{\can}$ is pre-well-ordered by $\le_M$; moreover, if such an $f$ has $\le_M$-rank $\alpha$, then $f'$ (defined by $f'(x)= f(x)'$) has $\le_M$-rank $\alpha+1$.
\end{enumerate}
\end{conj}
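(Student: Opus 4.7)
Since \thref{mc} is a long-standing open problem, I will focus on a plausible route to Part~I that fits the paper's announced strategy, and I will only gesture at Part~II. The high-level plan for Part~I is to reduce the global dichotomy (either $f$ is essentially constant on a cone, or $f$ dominates the identity on a cone) to a purely \emph{local} statement at the level of single Turing degrees: if a Turing-invariant $f:\can\to\can$ sends the degree $\bm x$ to the degree $\bm y$ and is not locally constant around $\bm x$, then $\bm x\le_T\bm y$. This is precisely the phenomenon flagged in the abstract, so proving it is the main intermediate target.

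Granting that local claim, the passage to the global statement of Part~I should go as follows. Let $f$ be TI. The set $C=\{x:f\text{ is constant on }\{z:z\equiv_T x\}\}$ is Turing-invariant, so by \ax{TD} either $C$ contains a cone, in which case on that cone $f(x)\equiv_T y$ for some fixed $y$ and we are done, or $C$ is disjoint from a cone. In the latter case, the local statement applied degree-wise inside that cone yields $f(x)\ge_T x$ pointwise, i.e.\ $f\ge_M\id_{\can}$. So the real work is the local principle, and the role of \ax{TD} is the usual one: turn any Turing-invariant dichotomy into a cone statement by taking a cone representative.

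The main obstacle is then the local statement itself. To attack it, I would fix $\bm x$, set $\bm y=f(\bm x)$ and suppose toward a contradiction that $\bm x\not\le_T \bm y$. The strategy would be a Kleene--Post / forcing-with-finite-conditions construction, carried out above a representative of $\bm y$, producing two reals $x_0,x_1\in\bm x$ such that $f(x_0)\not\equiv_T f(x_1)$, contradicting Turing invariance of $f$. The non-trivial ingredient is to make the construction uniform in $f$ and compatible with \ax{AD}; I would expect to package $f$ together with a \virg{bad} choice of inputs into a Turing-invariant game whose determinacy, via Martin's cone theorem, delivers the needed $x_0,x_1$. The delicate point is that we need an element of $\bm x$ that is, in a precise sense, \emph{generic over} $\bm y$, and the assumption $\bm x\not\le_T\bm y$ is what should make such genericity available.

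For Part~II, my first instinct would be to try to iterate the local principle up the jump hierarchy: if $g\not\le_M f$, try to force $f(x)<_T g(x)$ on a cone, and then read a jump-level comparison from the local structure of $f$ and $g$ on single degrees. I expect this will not be enough on its own and that a genuinely new local phenomenon is needed at the level of the jump, which matches the open question raised at the end of the abstract; so in a first pass I would state Part~II only as a target and leave it open.
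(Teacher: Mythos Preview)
The statement you are addressing is Martin's \emph{conjecture}; the paper does not prove it and does not claim to. What the paper does prove is the \emph{uniform} version of Part~I (\thref{umc1}), and it does so by exactly the local-to-global reduction you outline: define the Turing-invariant set of degrees on which $f$ is constant, apply \ax{TD}, and in the nonconstant case invoke a local lemma (\thref{lmc1}) saying that a non-constant UTI map on a single degree already forces $f(x)\ge_T x$. So your global scaffolding is on target, but it is scaffolding for the uniform case, not for \thref{mc}.

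The substantive gap is your proposed attack on the local claim for general TI $f$. As written, the contradiction target is vacuous: if $x_0,x_1\in\bm x$ then $x_0\equiv_T x_1$, and Turing invariance \emph{immediately} gives $f(x_0)\equiv_T f(x_1)$; no forcing construction can produce a counterexample to that. The paper's proof of the local lemma does not go by contradiction at all: it takes two points $x,z\in\bm x$ with $f(x)\neq f(z)$, uses the bits of $x$ to select between $x$ and $z$ via a computable family of reductions, and then decodes $x$ from $f(x)$ using a \emph{computable uniformity function} for $f$ (supplied by \thref{comvar}) together with \thref{complem}. Uniformity is used essentially here --- it is exactly what lets one pass from ``$f$ respects the coding'' to ``$f(x)$ computes the coded sequence''. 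Without a uniformity function there is no known way to run this decoding, which is why the non-uniform local statement (and hence Part~I of \thref{mc}) remains open.

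A smaller point: in your reduction, ``$C$ contains a cone'' does not by itself give a single $y$ with $f(x)\equiv_T y$ on a cone; you still need the standard \ax{TD} argument (\thref{lemcost} in the paper) that a degree-invariant $f$ which is constant on each degree is literally constant on a cone.
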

On a side note, part I  of Martin's conjecture, in particular, has been in vogue since the discovery of its profound consequences in the theory of countable Borel equivalence relations (see \cite{thomas}).

Despite still being open, Martin's conjecture was proved true when restricted to a particular class of functions --- namely the UTI functions --- by Slaman and Steel in \cite{steel, ss}. Let us recall what UTI functions are.

Let $(\phi_i^x)_{i\in\NN}$ be the standard numbering of partial unary computable-in-$x$ functions, where the oracle $x$ is a function from $\NN$ to $\NN$.
Given $x,y\in\can$ and $i,j\in\NN$, we say that $x\le_T y$ via $i$ if $x=\phi_i^y$, and we say that $x\equiv_T y$ via $(i,j)$ if $x\ge_T y$ via $i$ and $x\le_T y$ via $j$. 

A function  $f:A\to\can$, with $A\subseteq\can$, is said to be \textbf{uniformly order-preserving} (abbreviated \textbf{UOP}) if every time we have $x,y\in A$ such that $x\le_T y$ via $i$, we can choose \emph{uniformly in $x$ and $y$} an index $j$ such that $f(x)\le_T f(y)$ via $j$. In other words, $f$ is UOP if there is a function $u:\NN\to\NN$ such that
\[
x\le_T y\text{ via $i$}\implies f(x)\le_T f(y)\text{ via $u(i)$}
\]
for all $x,y\in A$. Similarly, $f$ is said to be \textbf{uniformly Turing invariant} (abbreviated \textbf{UTI}) if there is a function $u:\NN^2\to\NN^2$ such that, for all $x,y\in A$,
\[
x\equiv_T y\text{ via $(i,j)$}\implies f(x)\equiv_T f(y)\text{ via $u(i,j)$}.
\]
\emph{Uniform} Martin's conjecture refers to Martin's conjecture for \emph{UTI} functions only, \emph{projective} Martin's conjecture refers to Martin's conjecture for \emph{projective} TI functions only, and so on.

\section{Part I of uniform Martin's conjecture, from local to global}
In Steel's paper \cite{steel} and in Slaman and Steel's paper \cite{ss}, it was proved respectively that part II and part I of uniform Martin's conjecture hold. 
We shall present a soft proof of the following slight improvement of the latter result. Recall that  Turing Determinacy (\ax{TD}) denotes the statement that every $A\subseteq\can$ which is closed under $\equiv_T$ either contains a cone or is disjoint from a cone, and also recall this follows from \ax{AD} by Martin's cone theorem.

\begin{thm}\thlabel{umc1}
Assume \ax{TD} and let $f:\can\to\can$ be UTI on a cone. Then, either $f(x)\ge_T x$ on a cone, or there exists $y\in\can$ such that $f(x)=y$ on a cone.
\end{thm}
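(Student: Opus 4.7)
The plan is to perform two applications of \ax{TD} bracketing a local analysis based on the uniformity function. First, \ax{TD} applied to the Turing-invariant set $C=\Set{x\in\can|f(x)\ge_T x}$ splits into two cases: if $C$ contains a cone, the first alternative of the conclusion holds; otherwise $C$ is disjoint from a cone, and intersecting with the cone on which $f$ is UTI via some uniformity $u:\NN^2\to\NN^2$ yields a cone $\mathcal C$ on which $f$ is UTI and $f(x)\not\ge_T x$ everywhere. The remaining task is to show $f$ is eventually equal to a single real.

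The core step is the local phenomenon: on $\mathcal C$, $f$ is constant on each Turing degree $[x]_T$. I would argue by contradiction. Suppose $y\equiv_T x$ with $f(y)\ne f(x)$. Writing $u(i,j)=(u_1(i,j),u_2(i,j))$, UTI ensures that whenever $(i,j)$ genuinely witnesses $x\equiv_T\phi_i^x$ one has $f(\phi_i^x)=\phi_{u_1(i,j)}^{f(x)}$. So from $f(x)$ alone one can uniformly enumerate, indexed by $(i,j)$, candidate images $c_{ij}=\phi_{u_1(i,j)}^{f(x)}$, and the hypothesis supplies two distinct actually-realized values $f(x)\ne f(y)$ in this list. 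The key idea is that the presence of two distinct realized values within the $u$-induced family constrains $u$ enough that a genuine pair $(i,j)$ can be singled out computably in $f(x)$, whence $\phi_i^x$ and then $\phi_j^{\phi_i^x}=x$ become computable from $f(x)$, contradicting $f(x)\not\ge_T x$.

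With local constancy in hand, each level set $A_c=\Set{x|f(x)=c}$ becomes Turing-invariant, so by \ax{TD} each contains or is disjoint from a cone, and by pairwise disjointness at most one $A_c$ contains a cone. To ensure one does, I would first apply \ax{TD} to the invariant sets $\Set{x|f(x)\equiv_T f(z)}$ for fixed $z\in\mathcal C$ to pass to a sub-cone on which the image of $f$ lies in a single Turing degree $[f(z)]_T$; since this degree is countable, the sub-cone is partitioned into countably many invariant pieces $A_c$, and countable additivity of the cone measure (from \ax{TD} together with \ax{DC}) forces exactly one to contain a cone.

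The principal obstacle is the local phenomenon step: converting non-constancy on a single Turing degree into a Turing reduction from $f(x)$ to $x$. The uniformly-enumerated family of candidates contains many spurious entries, coming from pairs $(i,j)$ that fail to witness any genuine Turing equivalence in $[x]_T$, and the heart of the argument lies in using the hypothesis on realized values to cut these down to genuine witnesses using only data computable from $f(x)$. A secondary but non-trivial obstacle is justifying the reduction to a single-degree image in the final step, which requires a careful further use of \ax{TD} coordinated with the local phenomenon.
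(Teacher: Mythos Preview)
Your global framework matches the paper's: apply \ax{TD} to split into the increasing case versus the locally-constant case, then invoke the local phenomenon, then upgrade local constancy to literal constancy via further uses of \ax{TD}. The final upgrade step you outline (first pass to a single degree, then use countable additivity) is correct but more elaborate than necessary; the paper simply applies \ax{TD} to each digit set $\Set{x|f(x)(i)=j}$ and intersects the resulting cones.

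The genuine gap is in the local phenomenon, and your sketch points in the wrong direction. You propose to enumerate the candidates $c_{ij}=\phi_{u_1(i,j)}^{f(x)}$ and then, using the two distinct realized values, ``single out a genuine pair $(i,j)$ computably in $f(x)$, whence $\phi_i^x$ and then $\phi_j^{\phi_i^x}=x$ become computable from $f(x)$''. But even a genuine $(i,j)$ only names a reduction with \emph{oracle $x$}; knowing $(i,j)$ and $f(x)$ does not let you run $\phi_i^x$. There is also a prior obstruction you do not address: the uniformity $u$ is an arbitrary function $\NN^2\to\NN^2$, so the map $(i,j)\mapsto c_{ij}$ need not be computable in $f(x)$ at all.

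The paper resolves both issues differently. First, a separate lemma shows that any UTI function on a degree admits a \emph{computable} uniformity function $v$ (obtained by factoring an arbitrary equivalence witness through a fixed finite stock of pairs and composing with $u$ on those finitely many values). Second, rather than trying to identify a single genuine pair, the paper \emph{encodes the bits of $x$ into indices}: choose $z\equiv_T x$ with $f(z)\ne f(x)$, take a computable $t$ with $t(n)\ta x\simeq x$ or $z$ according to $x(n)$, and form $\bigoplus_n \phi^{f(x)}_{\pi\circ v\circ t(n)}$. By computability of $v$ and $t$ this join is $\le_T f(x)$, and its columns alternate between $f(x)$ and $f(z)$ exactly as the bits of $x$ do, so a row where $f(x)$ and $f(z)$ differ recovers $x$. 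That encoding idea, together with the computable-uniformity lemma, is the missing core of your plan.
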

Let us stress the differences between the results: Slaman and Steel showed that, under \ax{AD}, UTI functions are either increasing or constant \emph{up to Turing equivalence} on a cone. By contrast, \thref{umc1} tells us that, under the sole assumption of \ax{TD}, UTI functions are either increasing or \emph{literally} constant on a cone.

The interesting thing about our proof of \thref{umc1} is that it shows us how the global dichotomy in such theorem actually arises from an analogous local dichotomy, i.e.\ a dichotomy that UTI functions exhibit on each single Turing degree. 

\begin{thm}\thlabel{lmc1}
Let $x\in\can$ and  $f:[x]_{\equiv_T}\to\can$ be UTI.  Then, either $f(x)\ge_T x$ or $f$ is constant.
\end{thm}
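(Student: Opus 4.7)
The plan is to prove the contrapositive: assuming $f$ is not constant on $[x]_{\equiv_T}$, I exhibit a Turing reduction showing $f(x)\ge_T x$. Pick $y\in[x]_{\equiv_T}$ with $f(y)\neq f(x)$; automatically $y\neq x$, so fix $n_0\in\NN$ with $y(n_0)\neq x(n_0)$ and $m\in\NN$ with $f(y)(m)\neq f(x)(m)$. Fix indices $(i,j)$ witnessing $y\equiv_T x$, namely $y=\phi_i^x$ and $x=\phi_j^y$.

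The central idea is to encode every bit of $x$ into a computable family of equivalence witnesses to which the uniformity function $u$ of $f$ can be applied. For each $n\in\NN$ define $y_n:=x$ when $x(n)=0$ and $y_n:=y$ when $x(n)=1$, so $y_n\in\{x,y\}\subseteq[x]_{\equiv_T}$. Using the $s$-$m$-$n$ theorem, I choose a computable $n\mapsto i_n$ for the program that, on oracle $z$ and input $k$, outputs $z(k)$ when $z(n)=0$ and $\phi_i^z(k)$ when $z(n)=1$; then $\phi_{i_n}^x=y_n$. The crucial move is then to pick a \emph{single} index $j^*$, \emph{independent of} $n$, for the program that, on oracle $z$ and input $k$, outputs $z(k)$ when $z(n_0)=x(n_0)$ and $\phi_j^z(k)$ otherwise. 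Using $y(n_0)\neq x(n_0)$ one checks $\phi_{j^*}^{y_n}=x$ in both cases $y_n=x$ and $y_n=y$, so $(i_n,j^*)$ witnesses $y_n\equiv_T x$.

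By UTI we get $f(y_n)=\Phi_{u(i_n,j^*)_0}^{f(x)}$, while by construction $f(y_n)=f(x)$ iff $x(n)=0$ and $f(y_n)=f(y)$ otherwise. Hence from the oracle $f(x)$ I compute $\Phi_{u(i_n,j^*)_0}^{f(x)}(m)$ and compare it to $f(x)(m)$, using $f(y)(m)=\Phi_{u(i,j)_0}^{f(x)}(m)$ (computable from $f(x)$) as a fixed reference, to read off $x(n)$. All the remaining data---the constants $i,j,n_0,m$, the bits $x(n_0),y(n_0)$, the computable map $n\mapsto i_n$, and the uniformity function $u$---are independent of $n$, so this yields $x\le_T f(x)$ as required.

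The main obstacle is the construction of the single index $j^*$: in order for the uniformity function $u$ to be applied to a computable $n$-family $(i_n,j^*)$ --- keeping the oracle use of the reduction confined to $f(x)$ --- we must not let the second coordinate vary with $n$. The trick that makes this possible is to exploit the fixed distinguishing bit $n_0$ on which $x$ and $y$ differ as an internal flag inside $j^*$, letting one and the same program recognize whether its oracle is $x$ or $y$ and dispatch between the identity computation and the $\phi_j$-computation accordingly.
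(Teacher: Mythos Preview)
Your overall strategy coincides with the paper's: encode each bit $x(n)$ by a computable family of equivalence-via witnesses $(i_n,j^*)$ with fixed second coordinate, push these through $f$ using the uniformity function, and read off $x(n)$ from a single distinguishing bit $m$ of the output. Your $j^*$ is exactly the paper's single index $e$ with $\phi_e^x=\phi_e^z=x$, and the dispatching-on-$n_0$ trick you highlight is one natural way to realize it.

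There is, however, a genuine gap. In your last paragraph you list ``the uniformity function $u$'' among the data that are ``independent of $n$'' and conclude that the procedure is a Turing reduction to $f(x)$. But the UTI hypothesis only guarantees that \emph{some} $u:\NN^2\to\NN^2$ exists; nothing says it is computable. Your algorithm must evaluate $u(i_n,j^*)$ for every $n$, and since $n\mapsto i_n$ hits infinitely many indices, a non-computable $u$ cannot be absorbed as a finite constant into the reduction. As written, you have shown $x\le_T f(x)\oplus u$, not $x\le_T f(x)$. Fixing the second coordinate $j^*$ is a nice move, but it does not by itself solve this problem, since the first coordinate still varies.

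The paper closes precisely this gap with a separate lemma: whenever the domain of $f$ is closed under prepending $0$'s and $1$'s (as $[x]_{\equiv_T}$ is), any UTI $f$ admits a \emph{computable} uniformity function. The trick is to factor every index through a fixed finite alphabet of basic moves (prepend $0$, prepend $1$, strip a bit, decode), so that only finitely many values of the original $u$ are ever needed and can be hard-coded. Once you invoke or prove this lemma, your argument goes through and is essentially the paper's proof.
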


This \thnameref{lmc1}, which could be called a local version of Slaman and Steel's theorem, is the main result of this paper. Before we prove it, let us show how easily \thref{umc1} descends from it. 
\begin{proof}[Proof of \thref{umc1}]
Suppose $f$ is UTI in the cone above $z$. Consider
\[
A=\Set{x\in\can | \text{$f\rs [x]_{\equiv_T}$ is constant}}.
\]
$A$ is Turing invariant, so by \ax{TD} either $\can\setminus A$ or $A$ contains a cone. In the former case --- say $\can\setminus A$ contains the cone above $w$ --- given any $x\ge_T z\oplus w$, we can apply \thref{lmc1} and deduce $f(x)\ge_T x$. Otherwise, if $A$ contains a cone, next folklore \thnameref{lemcost} applies.
\end{proof}
\begin{fact}\thlabel{lemcost}
Suppose $f:\can\to\can$ is such that the following holds for all $x,y$ in a cone:
\[
x\equiv_T y\implies f(x)=f(y).
\]
Then, assuming \ax{TD}, $f$ is literally constant on a cone.
\end{fact}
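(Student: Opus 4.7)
The plan is to apply \ax{TD} one bit at a time. Fix $z_0\in\can$ such that the hypothesis $x\equiv_T y\implies f(x)=f(y)$ holds on the cone above $z_0$, and for each $n\in\NN$ form
\[
A_n = \Set{x\in\can | x\ge_T z_0 \text{ and } f(x)(n)=0}.
\]

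The first thing I would verify is that each $A_n$ is Turing invariant. This rests on two observations: the set $\Set{x\in\can | x\ge_T z_0}$ is itself closed under $\equiv_T$ by transitivity of $\le_T$, and on that cone $f(x)(n)$ depends only on $[x]_{\equiv_T}$ by hypothesis. Then \ax{TD} applies and assigns to each $n$ a canonical bit $b_n\in\{0,1\}$: namely $b_n=0$ if $A_n$ contains a cone, and $b_n=1$ otherwise (in which case $\can\setminus A_n$ contains a cone). Since $b_n$ is uniquely determined by $A_n$, the real $y\in\can$ defined by $y(n)=b_n$ is specified without invoking any choice.

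The remaining step, and the only delicate one, is to assemble a \emph{single} cone on which $f=y$. For this I would invoke \ax{DC}: for every $n$, pick a real $z_n\ge_T z_0$ such that the cone above $z_n$ witnesses $f(x)(n)=b_n$ (such a $z_n$ exists by the previous paragraph, replacing the witness by its join with $z_0$ if needed). Encode the countable sequence $(z_n)_{n\in\NN}$ as one real $z\in\can$ via $z(\langle n,k\rangle)=z_n(k)$, where $\langle\cdot,\cdot\rangle$ is a fixed pairing $\NN\times\NN\to\NN$. Any $x\ge_T z$ then satisfies $x\ge_T z_n$ for every $n$, so $f(x)(n)=b_n=y(n)$ for every $n$, i.e.\ $f(x)=y$ on the cone above $z$.

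The main potential obstacle is precisely this final amalgamation: \ax{TD} by itself only supplies, for each bit position, a separate cone of witnesses, and merging them into one cone requires making countably many selections. This is where \ax{DC} --- part of the ambient theory of the paper --- enters in an essential way.
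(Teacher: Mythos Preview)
Your proof is correct and follows essentially the same route as the paper: apply \ax{TD} to the Turing-invariant sets determining each bit of $f(x)$, then intersect the resulting countably many cones. You are simply more explicit than the paper about the role of \ax{DC} in selecting the cone bases $z_n$ and forming their join; the paper compresses this into the parenthetical remark that the intersection of the $C_i$'s ``trivially contains a cone''.
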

The easy yet classic argument for \thref{lemcost} is probably found for the first time in  \cite{ss}, in the form of a remark that \ax{AD} implies there is no choice function on Turing degrees. We present it here for the reader's convenience.

\begin{proof}[Proof of \thref{lemcost}]
Suppose that the hypothesis holds in the cone based in $z$. Then, the sets of the form
\[
\Set{x\ge_T z| f(x)(i)=j}
\]
are Turing invariant, and so \ax{TD} implies that the $i$-th digit of $f(x)$ is constant for all $x$ in a cone $C_i$. Hence, $f$ is constant on the intersection of the $C_i$'s (which trivially contains a cone).
\end{proof}

\thref{lmc1} enables us to calibrate the strength of the statement of part I of uniform Martin's conjecture over \ax{ZF+DC}. 
We actually have two different statements for uniform Martin's conjecture part I, namely the original one and the statement of \thref{umc1}. However, our calibration holds for both.
\begin{coroll}\thlabel{metac}
The following statements are equivalent over \ax{ZF+DC}:
\begin{enumerate}[$(a)$]
    \item \ax{TD};
    \item for all $f:\can\to\can$ which is either UTI on a cone, either $f(x)\ge_T x$ on a cone, or $f$ is literally constant on a cone;
    \item for all $f:\can\to\can$ which is either UTI on a cone, either $f(x)\ge_T x$ on a cone, or $f$ is constant up to $\equiv_T$ on a cone.
\end{enumerate}
\end{coroll}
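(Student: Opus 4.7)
The plan is to prove the cycle $(a) \Rightarrow (b) \Rightarrow (c) \Rightarrow (a)$. The implication $(a) \Rightarrow (b)$ is exactly \thref{umc1}, and $(b) \Rightarrow (c)$ is immediate because literal constancy on a cone is strictly stronger than constancy up to $\equiv_T$. The substantive direction is $(c) \Rightarrow (a)$.

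For $(c) \Rightarrow (a)$, I would fix a Turing-invariant set $A \subseteq \can$ and construct a UTI function $g:\can \to \can$ whose dichotomy under $(c)$ reads off the cone-status of $A$. My candidate is
\[
g(x) = \begin{cases} 1 \frown x & \text{if } x \in A, \\ 0^\infty & \text{if } x \notin A, \end{cases}
\]
where $1 \frown x$ denotes the real with $0$-th bit $1$ and $(n+1)$-th bit $x(n)$. The first step is to verify that $g$ is UTI. Given $(i,j)$, I would take $u(i,j) = (a_{i,j}, b_{i,j})$ where $a_{i,j}$ is a program that, on oracle $z$, first reads $z(0)$: if $z(0)=0$ it outputs $0^\infty$, and if $z(0)=1$ it strips the leading bit of $z$, applies $\phi_i$, and prepends a $1$; $b_{i,j}$ is analogous with $\phi_j$. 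Turing-invariance of $A$ guarantees that both members of any $\equiv_T$-equivalent pair fall into the same branch, so this single pair of indices handles every $(x,y)$ with $x \equiv_T y$ via $(i,j)$.

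Then I would apply $(c)$ to $g$. If $g(x) \ge_T x$ on a cone above $z$, any non-computable $x \ge_T z$ lying in $A^c$ would force $0^\infty \ge_T x$, which is impossible; hence every non-computable element of the cone lies in $A$, and the cone above any non-computable $z' \ge_T z$ is contained in $A$. If instead $g(x) \equiv_T y$ on a cone above $z$, then when $y$ is computable every $x \in A$ above $z$ satisfies $x \equiv_T 1 \frown x \equiv_T y \equiv_T \emptyset$ and is therefore computable, so the cone above any non-computable $z' \ge_T z$ is disjoint from $A$; and when $y$ is non-computable, no $x \ge_T z$ can lie in $A^c$ (since $0^\infty \not\equiv_T y$), so $A$ already contains the cone above $z$. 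Every sub-case delivers \ax{TD}.

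The main obstacle will be the UTI verification. The obvious splitter $g(x)=x$ on $A$ and $g(x)=0^\infty$ on $A^c$ is not UTI, because no single index $u(i,j)$ can simultaneously realize the reduction $\phi_i$ between $A$-members and send $0^\infty$ to $0^\infty$. The tag bit in the first coordinate of $g$ bypasses this by letting the uniform reduction inspect the oracle and branch on it; this branching is the only genuine computability-theoretic ingredient in the argument.
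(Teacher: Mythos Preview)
Your argument is correct, but the route you take for $(c)\Rightarrow(a)$ is more elaborate than the paper's. The paper defines
\[
f(x)=\begin{cases}\underline 0 & x\in A,\\ \underline 0' & x\notin A,\end{cases}
\]
a function taking only two fixed, Turing-inequivalent values. Because $A$ is Turing invariant, $f$ is constant on each Turing degree, so a single pair of identity indices witnesses UTI with no effort. The alternative $f(x)\ge_T x$ on a cone is immediately absurd (both values are below $\underline 0'$), so $(c)$ forces $f$ to be $\equiv_T$-constant on a cone, and the two possible constants correspond exactly to $A$ or $A^c$ containing a cone.

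Your tagged function $g(x)=1^\frown x$ on $A$, $g(x)=0^\infty$ on $A^c$ also works, and your UTI verification via the branching program is sound. But the whole point of the tag bit --- letting the uniform reduction detect which branch it is in --- is solving a problem the paper simply avoids by never encoding $x$ into the output in the first place. Your case analysis at the end (splitting on whether $g(x)\ge_T x$ holds, and then on whether the constant $y$ is computable) is likewise correct but carries extra baggage that the two-valued function dispenses with. The moral: to extract \ax{TD} from $(c)$ you only need a UTI function that distinguishes $A$ from $A^c$ up to $\equiv_T$, not one that remembers $x$.
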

\begin{proof}
 $(a)\implies (b)$ is precisely \thref{umc1}, whereas $(b)\implies(c)$ is trivial.
Let us prove $(a)$ from $(c)$. Fix $A\subseteq\can$ which is Turing invariant (i.e.\ closed under $\equiv_T$), and define
\[
f(x)=\begin{cases}
\underline0=000\dots&\text{if $x\in A$,}\\
\underline0'&\text{if $x\not\in A$.}
\end{cases}
\]
Of course, $f$ is UTI, so by $(c)$ we get that  $f$ is constant on a cone up to $\equiv_T$. Then, either $f(x)\equiv_T\underline0$ on a cone, or $f(x)\equiv_T\underline0'$ on a cone. In the former case, $A$ contains a cone, in the latter one, the complement of $A$ does.
\end{proof}
In \cite{chong}, the authors calibrated the strength of part II of uniform Martin's conjecture for projective functions.
\begin{thm}[Chong, Wang and Yu, \cite{chong}]\thlabel{thmchong}
Over \ax{ZFC}, part II of projective uniform Martin's conjecture is equivalent to Projective Determinacy (which, by unpublished work by Woodin, is equivalent over \ax{ZFC} to Projective Turing Determinacy).
\end{thm}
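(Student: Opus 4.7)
The strategy is to prove the equivalence via the three-step loop: \ax{PD} implies Part II of projective uniform Martin's conjecture, which implies \ax{PTD}, which in turn implies \ax{PD}. The last implication is the cited unpublished theorem of Woodin and is invoked here as a black box, so the real work is the first two steps.

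For the first implication I would projectivize the original Slaman--Steel proof of Part II of uniform Martin's conjecture from the \ax{AD} setting to the \ax{PD} setting. Their argument builds the $\le_M$-pre-well-order on UTI functions $\ge_M\id_{\can}$, and identifies the Turing jump with the $\le_M$-successor operation, by playing length-$\omega$ games whose payoff sets are definable from the input UTI functions. When those functions are projective, the payoffs sit at the projective level, so \ax{PD} supplies exactly the needed determinacy; no fundamentally new ideas are required, only a careful audit that no step secretly uses determinacy beyond the projective pointclass.

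For the second implication, the plan is to decide the cone-status of an arbitrary projective Turing-invariant $A\subseteq\can$ by reading off the $\le_M$-rank, in the Part II pre-well-order, of a suitably chosen projective UTI function. A first attempt is
\[
f_A(x)=\begin{cases}
x&\text{if $x\in A$,}\\
x'&\text{if $x\notin A$.}
\end{cases}
\]
This function is projective, Turing invariant, and above $\id_{\can}$; if genuinely UTI, Part II would place it at some rank $\alpha$, forcing either $\alpha=0$ (whence $f_A(x)\le_T x$ on a cone, so $A$ contains a cone) or $\alpha\ge 1$ (whence $f_A(x)\ge_T x'$ on a cone, so $f_A(x)=x'$ on a cone, and hence $\can\setminus A$ contains a cone). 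The subtlety is that piecewise constructions of this shape generally fail UTI-ness: a single witness $u(i,j)$ would have to translate a reduction $x\equiv_T y$ via $(i,j)$ into a reduction between $x$ and $y$ on the $A$ piece, and into a reduction between $x'$ and $y'$ on its complement, and no fixed index accomplishes both tasks. One therefore needs a more refined coding --- for instance packaging a marker bit together with the jump in the output, or comparing the $\le_M$-positions of two separately defined projective UTI functions whose relative rank tracks whether $A$ contains a cone --- together with an appeal to the folklore \thnameref{lemcost}-type argument to pass from equivalence-up-to-$\equiv_T$ to literal cone-statements about $A$.

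The main obstacle is precisely this coding step: producing a projective UTI function whose $\le_M$-rank in the Part II pre-well-order faithfully encodes the cone-status of $A$, while respecting the stringent uniformity that UTI-ness demands. Once \ax{PTD} is in hand the theorem closes via Woodin's cited implication, and the projective auditing required for the first implication should be routine by comparison.
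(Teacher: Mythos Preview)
The paper does not prove this theorem: it is quoted from Chong, Wang and Yu with no argument supplied, and the parenthetical Woodin implication is likewise invoked as a black box. There is therefore no proof in the present paper to compare your proposal against.

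On its own merits your outline is sound, and your diagnosis of the obstacle in the second step is correct: the naive $f_A$ is not UTI, because no single index simultaneously realizes $x\mapsto y$ on the $A$-piece and $x'\mapsto y'$ on its complement. But the marker-bit fix you mention in passing already dissolves the difficulty. Take
\[
g_A(x)=\begin{cases}0^\frown x&\text{if }x\in A,\\1^\frown x'&\text{if }x\notin A.\end{cases}
\]
Given $x\equiv_T y$ via $(i,j)$, a single program with oracle $g_A(x)$ reads the leading bit; on $0$ it applies $i$ to the tail and prepends $0$, on $1$ it applies the uniformly computed jump-index to the tail and prepends $1$. This is uniform in $(i,j)$, so $g_A$ is projective, UTI, and $\ge_M\id_{\can}$. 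Part~II then places $g_A$ at rank $0$ (so $g_A(x)\equiv_T x$ on a cone, hence $A$ contains a cone) or at rank $\ge 1$ (so $g_A(x)\ge_T x'$ on a cone, which is impossible when $x\in A$, hence $\can\setminus A$ contains a cone). Thus the coding step you flag as ``the main obstacle'' is not one; the only substantive deferral in your proposal is the first implication, namely checking that the Slaman--Steel (or Becker) argument for Part~II goes through level-by-level under \ax{PD} rather than full \ax{AD}.
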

Putting together \thref{metac} and \thref{thmchong}, and observing that our assumption of \ax{TD} in the proof of \thref{metac} ``localizes'', we get:
\begin{thm}
The following are equivalent over \ax{ZFC}:
\begin{itemize}
    \item Projective Determinacy;
    \item Projective Turing Determinacy;
    \item part I of projective uniform Martin's conjecture;
    \item part II of projective uniform Martin's conjecture.
\end{itemize}
\end{thm}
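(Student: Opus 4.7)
My plan is to close the cycle of implications by combining \thref{metac} with \thref{thmchong}, paying attention to projective complexity throughout. The equivalence of Projective Determinacy with Projective Turing Determinacy, and of either with part II of projective uniform Martin's conjecture, is packaged in \thref{thmchong}. What remains is the equivalence between Projective Turing Determinacy and part I of projective uniform Martin's conjecture, and the whole argument consists in rerunning the proof of \thref{metac} while checking that every Turing-invariant set to which \ax{TD} gets applied is projective whenever the input function (or set) is.

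For the direction Projective Turing Determinacy $\Rightarrow$ part I of projective uniform Martin's conjecture, I would revisit the proof of \thref{umc1} starting from a function $f:\can\to\can$ which is both UTI on a cone and projective. The set
\[
A=\Set{x\in\can|f\rs[x]_{\equiv_T}\text{ is constant}}=\Set{x\in\can|\forall y(y\equiv_T x\to f(y)=f(x))}
\]
is projective, being defined from the projective graph of $f$ together with the arithmetic relation $\equiv_T$, so Projective Turing Determinacy applies to $A$. If $\can\setminus A$ contains a cone, the local \thref{lmc1} (a \ax{ZF} result requiring no determinacy at all) yields $f(x)\ge_T x$ on a cone; if instead $A$ contains a cone, the auxiliary sets $\{x\ge_T z:f(x)(i)=j\}$ used in the proof of \thref{lemcost} are also projective, so Projective Turing Determinacy suffices to conclude that $f$ is literally constant on a cone. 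Hence projective UTI functions satisfy the strong dichotomy $(b)$ of \thref{metac}, which trivially entails $(c)$.

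For the converse direction I would mimic $(c)\Rightarrow(a)$ of \thref{metac}: given a projective Turing-invariant $A\subseteq\can$, the function $f$ defined by $f(x)=\underline0$ for $x\in A$ and $f(x)=\underline0'$ otherwise is UTI and projective, so part I of projective uniform Martin's conjecture applies to it. The range of $f$ is bounded, so the alternative $f(x)\ge_T x$ on a cone is impossible, and $f$ must be constant up to $\equiv_T$ on a cone, meaning that either $A$ or its complement contains a cone. The only non-mechanical step in the whole plan is the complexity bookkeeping: one has to verify that the set $A$ arising in the proof of \thref{umc1}, the sets used in \thref{lemcost}, and the test function $f$ above all remain within the projective hierarchy. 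I expect this check, rather than any new mathematical content, to be the entirety of the remaining work.
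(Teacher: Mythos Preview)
Your proposal is correct and follows exactly the paper's approach: the paper proves the theorem simply by combining \thref{metac} with \thref{thmchong} and remarking that the use of \ax{TD} in the proof of \thref{metac} ``localizes'' to the projective case. What you have written is precisely the unpacking of that localization claim --- the projective-complexity bookkeeping for the sets $A$, the sets in \thref{lemcost}, and the test function $f$ --- so your plan is in fact more detailed than the paper's own one-line justification.
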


\section{The proof}
We now address the proof of \thref{lmc1}. The argument itself is very short and easy, but we need a few preliminaries and notation first. Recall that the join (or merge) of $x,y\in\can$, is the element of $\can$ denoted by $x\oplus y$ and defined by
\[
(x\oplus y)(n)=\begin{cases}
x\left(\frac n2\right)&\text{if $n$ is even,}\\
y\left(\frac{n-1}2\right)&\text{if $n$ is odd.}
\end{cases}
\]
Moreover, the join (or merge) of a sequence  $(x_n)_{n\in\NN}$ of elements of $\can$ is the element of $\can$ denoted by $\bigoplus_{n }x_n$ defined by
\[
\left( \bigoplus_{n }x_n\right) (\braket{i,j})= x_j(i),
\]
where $\braket{\cdot,\cdot}$ is a computable bijection between $\NN^2$ and $\NN$ chosen once for all. We shall say that $x_j$ is the $j$-th column of $\bigoplus_n x_n$, while $n\mapsto x_n(i)$ is its $i$-th row.
The following fact easily descends from the existence of a universal oracle Turing machine.
\begin{fact}\thlabel{complem}
Fix  $x\in\can$ and a computable $t:\NN\to\NN$. If  
\begin{equation*}
    \bigoplus_{n } \phi_{t(n)}^x
\end{equation*}
is in $\can$, then it is Turing reducible to $x$.
\end{fact}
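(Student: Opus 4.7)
The plan is to produce an explicit Turing reduction from $y:=\bigoplus_n\phi_{t(n)}^x$ to $x$ by describing an $x$-computable procedure that, on input $k\in\NN$, outputs $y(k)$. Since $\braket{\cdot,\cdot}$ is a computable bijection between $\NN^2$ and $\NN$, from $k$ we can first computably recover the unique pair $(i,j)$ with $k=\braket{i,j}$. By definition of the join, we then have $y(k)=\phi_{t(j)}^x(i)$, and since $t$ is (plain) computable, we can compute the index $t(j)\in\NN$ from $j$ alone, without consulting the oracle.

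Next I would invoke the existence of a universal oracle Turing machine $U$: there is a computable procedure that, given any index $e\in\NN$, any input $n\in\NN$, and any oracle $z\in\can$, simulates $\phi_e^z(n)$. Feeding $U$ the index $e=t(j)$, the input $n=i$, and the oracle $x$, the simulation returns the value $\phi_{t(j)}^x(i)$ whenever this value is defined. Because by hypothesis $y\in\can$, the entry $y(\braket{i,j})=\phi_{t(j)}^x(i)$ is a well-defined element of $\{0,1\}$ for every $(i,j)\in\NN^2$; in particular the simulation $U^x(t(j),i)$ halts at every input and outputs $y(k)$.

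Composing these steps yields a total $x$-computable function $k\mapsto y(k)$, which is exactly a Turing reduction $y\le_T x$. There is no real obstacle in this argument; the only point worth flagging is that without the hypothesis $y\in\can$ one could not guarantee that $U^x(t(j),i)$ halts, and the naive procedure would produce only a \emph{partial} computable-in-$x$ function. It is precisely this hypothesis that upgrades the reduction from partial to total, giving $\bigoplus_n\phi_{t(n)}^x\le_T x$ as claimed.
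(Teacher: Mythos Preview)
Your argument is correct and is precisely the approach the paper has in mind: the paper does not spell out a proof but simply remarks that the fact ``easily descends from the existence of a universal oracle Turing machine,'' which is exactly the mechanism you invoke to simulate $\phi_{t(j)}^x(i)$ after computably decoding $k=\braket{i,j}$ and computing $t(j)$.
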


\begin{defi}
For $e\in\NN$ and $x\in\can$, set
		\[
		    e\tc x=
		    \begin{cases}
		    \phi_e^x &\text{if $\phi_e^x\in\can$}\\
		    \text{undefined}&\text{otherwise.}
		    \end{cases}
		\]
\end{defi}

The graph of $\tc$ is the set of $(e,x,y)$ in $\NN\times\can\times\can$ such that $y\le_T x$ via $e$; for this reason, we call $\tc$ ``\textbf{Turing reducibility via}''.

\begin{notation}
Let $\simeq$ denote Kleene's equality: $\phi\simeq \psi$ holds exactly when, if either $\phi$ or $\psi$ is defined, then the other is defined as well and the two are equal.
\end{notation}

Rephrasing the definition of UOP, given $A \subseteq\can$, we have that $f:A\to \can$ is UOP when there is $u:\NN\to\NN$ such that, for all $e\in\NN$ and $x,y\in A$,
\[
e\tc x\simeq y\implies u(e)\tc f(x)\simeq f(y),
\]
or equivalently,
\begin{equation}\label{uop}
    		e\odot_{T}x\text{ is defined and is in $A$}\implies u(e)\tc f(x)\simeq f(e\tc x).
\end{equation}

A function $u$ as above is called \textbf{uniformity function} for $f$. 

Also define, for $(i,j)\in\NN^2$ and $x\in\can$,
\[
		    (i,j)\ta x=
		    \begin{cases}
		    \phi_i^x&\text{if $\phi_i^x\in\can$ and $\phi_j^{\phi_i^x}=x$,}\\
		    \text{undefined}&\text{otherwise.}
		    \end{cases}
\]
The symbol $^s$ stands for `symmetrization': $\ta$ can be viewed as some kind of symmetrization of $\tc$, as we have
\begin{align*}
		       (i,j)\ta x\simeq y \iff
		    \begin{cases}
		    i\tc x\simeq y \\ j\tc y\simeq x.
		    \end{cases} 
\end{align*}
We call $\ta$ \textbf{Turing equivalence via}, since its graph is the set of $\bigl((i,j),x,y\bigr)$ in $\NN^2\times\can\times\can$ such that $x\equiv_T y$ via $(i,j)$.

Similarly as above, note that   $f:A\to \can$ is UTI when there is $u:\NN^2\to\NN^2$ such that, for all $(i,j)\in\NN^2$ and $x,y\in A$,
\[
(i,j)\ta x\simeq y\implies u(i,j)\ta f(x)\simeq f(y),
\]
or equivalently,
\begin{equation*}\label{uti}
    		(i,j)\ta x\text{ is defined and is in $A$}\implies u(i,j)\tc f(x)\simeq f\bigl((i,j)\ta  x\bigr).
\end{equation*}

Also in this case, $u$ is called a uniformity function for $f$.

\begin{lem}\thlabel{comvar}
Let $A\subseteq\can$  be such that for all $x\in A$, the concatenations $0^\frown x$ and $1^\frown x$ are in $A$, too. Let $f:A\to\can$  be either UOP or UTI. In either case, there is a computable uniformity function for $f$.
\end{lem}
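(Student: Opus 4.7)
My plan is to apply the $s$-$m$-$n$ theorem to promote the (a priori non-computable) uniformity function $u$ given by the UOP hypothesis into a computable $u^*$. I will focus on UOP in detail; the UTI case will follow by a symmetric argument yielding $u^* \colon \NN^2 \to \NN^2$, noting that UTI entails UOP in both directions.

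The closure hypothesis on $A$ is essential. The operations $x \mapsto i^\frown x$ for $i \in \{0,1\}$ and their inverses are Turing reductions realized by fixed computable indices $a_i, b_i$ (i.e.\ $a_i \tc x \simeq i^\frown x$ and $b_i \tc (i^\frown x) \simeq x$ for every $x$). Since $i^\frown x \in A$ whenever $x \in A$, UOP applied to $a_i, b_i$ yields specific natural numbers $\alpha_i := u(a_i)$ and $\beta_i := u(b_i)$ with
\[
\phi_{\alpha_i}^{f(x)} = f(i^\frown x) \quad \text{and} \quad \phi_{\beta_i}^{f(i^\frown x)} = f(x)
\]
for all $x \in A$. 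Even though $u$ is not computable as a function, these particular values $\alpha_i, \beta_i$ are specific natural numbers, so they can be treated as known constants inside the definition of a Turing machine.

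Using these anchors, I would define $u^*(e)$ by $s$-$m$-$n$ as an index of an oracle procedure that, given $z = f(x)$, uses the anchor constants together with $e$ to produce $f(e \tc x)$. A natural first move is to pass through $f(0^\frown x)$ (reachable from $z$ via $\alpha_0$), replace $e$ by a computable index $e'$ with $e' \tc (0^\frown x) \simeq e \tc x$, and then try to invoke a uniformity witness for $e'$ to reach $f(e \tc x)$. The obstacle is that the naive composition still requires the non-computable value $u(e')$, so closing the loop demands a more delicate step --- most likely an application of Kleene's recursion theorem, or a self-referential use of $s$-$m$-$n$ that exploits the anchors $\alpha_i, \beta_i$ in a nontrivial way. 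Resolving this is the technical core of the lemma; once done, the UTI case follows by applying the UOP argument twice, once in each direction of the Turing equivalence, to assemble the pair output $u^*(i,j) \in \NN^2$.
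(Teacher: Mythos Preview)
Your instinct to anchor on the fixed constants $u(a_i)$, $u(b_i)$ is exactly right, but the proposal stops precisely where the real work begins. You flag that ``the naive composition still requires the non-computable value $u(e')$'' and then gesture at the recursion theorem without saying how it would close the loop. In fact it does not: the recursion you set up --- computing $f(e\tc x)$ from $f(0^\frown x)$ by invoking the uniformity witness for the shifted index $e'$ --- just reproduces the same problem one level down, and a Kleene fixed point yields only a non-terminating self-call. There is no recursive procedure that extracts $u(e')$ from the finitely many anchor values you have isolated.

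The paper avoids this obstacle by a combinatorial encoding that eliminates the need for $u$ at all but three fixed arguments. Choose an index $a$ with $\phi_a^{0^e1^\frown x}=\phi_e^x$ (so $a$ reads a unary prefix of its oracle to recover $e$, then simulates $\phi_e$ on the remainder), and let $b,c$ be indices for the prepending maps $x\mapsto 0^\frown x$ and $x\mapsto 1^\frown x$. Then $e\tc x \simeq (a*_T b^e*_T c)\tc x$ for every $e$ and every $x$. The closure hypothesis on $A$ guarantees that each intermediate point $(b^k c)\tc x$ lies in $A$, so applying the UOP property one step at a time gives
\[
f(e\tc x)\simeq \bigl(u(a)*_T u(b)^e*_T u(c)\bigr)\tc f(x).
\]
The right-hand side depends on $e$ only through the exponent on the fixed constant $u(b)$, and $*_T$ is computable; hence $v(e):=u(a)*_T u(b)^e*_T u(c)$ is the desired computable uniformity function. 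The idea you were missing is that every index $e$ factors through a word in just three fixed indices, so $u$ is only ever evaluated at $a$, $b$, $c$. The UTI case is handled by an analogous (slightly more elaborate) unary encoding of the pair $(i,j)$, not by reducing UTI to UOP as you suggest.
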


\begin{proof}
First, suppose $f$ is UOP and $u$ is a uniformity function for it. 
Consider the obvious binary operation $*_T$ on $\NN$ that leads
		    \begin{equation*}
		     \phi_i^{\phi_j^x}=\phi_{i*_T j}^x,   
		    \end{equation*}
so that we have
\begin{equation}\label{act}
             j\odot_T x\text{ is defined}\implies i\odot_T (j\odot_T x)\simeq (i*_Tj)\odot_T x.
\end{equation}
Observe that $*_T$ is defined, at least implicitly, when showing that $\le_T$ is transitive. The crucial thing to note here is that $*_T$ is computable. 

Let $a,b,c\in\NN$ be such that $\phi_c^x=1^\frown x$, $\phi_b^x=0^\frown x$ and 
\[
\phi_a^{0^e 1^\frown x}=\phi_e^x
\]
for all $x\in\can$ ($0^e1$ is shorthand for $\underbrace{0\dots0}_e1$). Also, let $ij$  be shorthand for $i*_Tj$, $ijk$ for $i*_T(j*_T k)$, and so on.\footnote{In fact, $*_T$ is associative and \eqref{act} tells us that $\tc$ resembles an action of the semi-group $(\NN,*_T)$.} 
Now, fix $x\in A$ and $e\in\NN$ such that $e\tc x$ is defined and is in $A$ and notice that we have 
\[
e\tc x \simeq (ab^ec)\tc x.
\]
Therefore:
\begin{align*}
           f(e\odot_T x)\simeq f\bigl((ab^ec)\odot_T x\bigr) &\simeq u(a)\odot_T f\bigl((b^ec)\odot_T x\bigr)\\
           &\;\;\vdots\\
           &\simeq u(a)u(b)^eu(c)\odot_T f(x),
\end{align*}
where we used \eqref{act}, \eqref{uop} and the hypothesis that $(b^kc)\tc x$ is defined and is in $A$ for all $k\le e$.\footnote{The author wishes to thank Kirill Gura for pointing out the necessity of the hypothesis that $A$ is closed under initial appending of $0$'s and $1$'s in order to carry on this argument.}
Thus,  setting
\[
v(e)= u(a)u(b)^e u(c)
\]
we get that $v$ is a uniformity function for $f$, and since $u(a)$,  $u(b)$, $u(c)$ are three fixed natural numbers and $*_T$ is computable, $v$ is computable, too.

When $f$ is UTI, the argument analogous. This time, define
\[
(i,j)\mathbin{^s*_T}(k,l)= (i*_T k,l*_T j).
\]
Abbreviate $(i,j)\mathbin{^s*_T} (k,l)$ as $(i,j)(k,l)$ and $(i,j)\bigl( (k,l)(m,n)\bigr)$ as $(i,j)(k,l)(m,n)$.\footnote{Again, $^s*_T$ is associative, but this is unnecessary for the scope of this proof.} Observe that we have
\[
\text{$(k,l)\ta x$ is defined}\implies (i,j)\ta\bigl((k,l)\ta x\bigr)\simeq (i,j)(k,l)\ta x.
\]

Pick $m\in\NN$ such that $\phi_m^x(n)=x(n+1)$ for all $n$, and notice that, with $b,c\in\NN$ as before, we have, for all $x\in\can$:
\begin{align*}
  (c,m)\ta x&\simeq 1^\frown x & (b,m)\ta x&\simeq 0^\frown x ; \\
  (m,c)\ta (1^\frown x)&\simeq x & (m,b)\ta (0^\frown x)&\simeq x .
\end{align*}
Also let $d\in\NN$ be such that, for all $x\in\can$, 
\[
\phi_d^{0^i10^j1^\frown x}=0^j10^i1^\frown\phi_i^x.
\]
Now observe that, for $x,y\in\can$:
\begin{gather*}
 (i,j)\ta x\simeq y \implies (d,d)\ta (0^i10^j1^\frown x) \simeq 0^j10^i1^\frown y \\
 \implies (d,d)(b,m)^i(b,c)(b;m)^j(c,m)\ta x\simeq  (b,m)^j(c,m)(b,m)^i(c,m)\ta y,\\
 \implies (m,c)(m,b)^i(m,c)(m,b)^j(d,d)(b,m)^i(b,c)(b,m)^j(c,m)\ta x\simeq y.
\end{gather*}
Thus, if $u$ is a uniformity function for $f$, we can set
\[
v(i,j)= u(m,c)u(m,b)^iu(m,c)u(m,b)^ju(d,d)u(b,m)^iu(b,c)u(b,m)^ju(c,m)
\]
and the same argument as before gives us that $v$ is a computable uniformity function for $f$.
\end{proof}

\begin{proof}[Proof of \thref{lmc1}]
Suppose $f$ is not constant, so that there is $z\equiv_T x$ such that $f(x)\ne f(z)$. 
		    Obviously, there is a computable function $r$ such that
		    \[
		    \phi_{r(n)}^x=
		    \begin{cases}
		    x &\text{if $x(n)=1$,}\\
		    z &\text{if $x(n)=0$.}
		    \end{cases}
		    \]
		       Also obviously, there is $e\in\NN$ such that
            \[
		    \phi_e^x=\phi_e^z=x.
            \]
            Setting $t:\NN\to\NN^2,n\mapsto(r(n),e)$, we get that $t$ is computable and
		    \[
		    t(n)\ta x\simeq
		    \begin{cases}
		    x &\text{if $x(n)=1$,}\\
		    z &\text{if $x(n)=0$.}
		    \end{cases}
		    \]
We thus have
\[
f\bigl(t(n)\ta x\bigr)=
\begin{cases}
f(x) &\text{if $x(n)=1$,}\\
f(z)&\text{if $x(n)=0$.}
\end{cases}
\]
This means the columns of $\bigoplus_n f(t(n)\ta x)$ are either $f(x)$ or $f(z)$, and they alternate exactly as the bits of $x$ do. So, supposing that $f(x)$ and $f(z)$ differ on the $k$-th digit, the $k$-th row of  $\bigoplus_{n}f\bigl(t(n)\ta x\bigr)$ is either $x$ or $i\mapsto1-x(i)$, and hence
\begin{equation*}
    \bigoplus_{n}f\bigl(t(n)\ta x\bigr)\ge_T x.
\end{equation*}
But also, if we let $u$ be a computable uniformity function for $f$ (which exists by \thref{comvar}) and $\pi:\NN^2\to\NN$ be the projection on the first coordinate, we get
\[
\bigoplus_{n }f(t(n)\ta x)=\bigoplus_{n }\Bigl(u(t(n))\ta f(x)\Bigr)=\bigoplus_{n }\phi^{f(x)}_{\pi\circ u\circ t(n)},
\]
so \thref{complem} tells us that
\[
f(x)\ge_T \bigoplus_{n}f\bigl(t(n)\ta x\bigr).\qedhere
\]
\end{proof}

\section{Applications}
\subsection{Comparing Turing degrees as structures}
Although Turing degrees are usually viewed as the ``atoms'' of the main structure investigated in computability theory, namely $(\mathcal D,\le_T)$, Turing reductions provide each Turing degree with a structure, so we might study Turing degrees as structures themselves.

Given $A\subseteq\can$,  we call Turing reducibility on $A$ the following two-sorted relation:
\[
(\tca)=\Set{(e,x,y)\in\NN\times A\times A| y\le_Tx\text{ via }e},
\]
or, with an abuse of language, the underlying two-sorted structure $(A,\NN;\tca)$.\footnote{Note that $\NN$ does not carry any structure with it in $(A,\NN;\tca)$.} Turing equivalence via on $A$ is defined analogously. 

Even though, single Turing degrees are trivial structures when equipped with Turing reducibility or equivalence, they are \emph{not} trivial when endowed with Turing reducibility \emph{via} or Turing equivalence \emph{via}. So, for instance, we might want to understand, if the complexity of $\bm x$ as a structure depends on the computational complexity of $\bm x$ as a Turing degree, or how the structure on $\bm x$ relates to the structure on a different $\bm y$.

An \textbf{embedding} of $(\bm x,\NN;\tcx)$ into $(\bm y,\NN;\tcy)$ (or, more shortly, of $\tcx$ to $\tcy$) is a pair of functions $(f,u)$, with $f:\bm x\to\bm y$ and $u:\NN\to\NN$ preserving the truth of atomic formulas in both directions, which means
\begin{align*}
    i=j&\iff u(i)=u(j)\\
    x=y&\iff f(x)=f(z)\\
    i\tcx x\simeq z&\iff u(i)\tcy f(x)\simeq f(z),
\end{align*}
for all $i,j\in\NN$ and $x,z\in\bm x$. In other words, $f$ and $u$ are injective and preserve Turing reducibility via in both directions.


\begin{thm}\thlabel{thm2}
For all Turing degrees $\bm x$ and $\bm y$, the following are equivalent:
\begin{enumerate}
    \item the structure on $\bm x$ is embeddable in the structure on $\bm y$, when the structure is given by Turing reducibility via;\label{emb}
    \item the structure on $\bm x$ is embeddable in the structure on $\bm y$, when the structure is given by Turing equivalence via;\label{emb2}
    \item $\bm x\le_T\bm y$.\label{red}
\end{enumerate}
\end{thm}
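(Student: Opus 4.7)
The plan is to prove the equivalences cyclically, $(3) \Rightarrow (1) \Rightarrow (2) \Rightarrow (3)$. The first two implications are essentially bookkeeping, while $(2) \Rightarrow (3)$ is where \thref{lmc1} does the real work: an embedding of $\tax$ into $\tay$ is in particular a UTI injection from $\bm x$ to $\bm y$, so \thref{lmc1} forces it to be degree-preserving.

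For $(3) \Rightarrow (1)$, I fix any $y_0 \in \bm y$. Since $\bm x \le_T \bm y$ we have $x \le_T y_0$ for every $x \in \bm x$, hence $x \oplus y_0 \equiv_T y_0 \in \bm y$. Set $f(x) := x \oplus y_0$, which is injective because $x$ is recovered from the even columns. A single oracle procedure reads $x$ off $x \oplus y_0$, simulates $\phi_i^x$, and merges the result with $y_0$, yielding a computable (and, by padding, injective) $u:\NN\to\NN$ with $\phi_{u(i)}^{x \oplus y_0} \simeq \phi_i^x \oplus y_0$. The embedding biconditional then reduces to direct verification; the only mildly subtle point is that when $\phi_i^x = w \in \can \setminus \bm x$ (so $i\tcx x$ is undefined), the output $w \oplus y_0 \in \bm y$ is not in the image of $f$, so $u(i)\tcy f(x) \simeq f(z)$ also fails for every $z \in \bm x$, keeping both sides of the biconditional false.

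The implication $(1) \Rightarrow (2)$ is immediate: given an embedding $(f, u)$ of $\tcx$ into $\tcy$, set $u'(i,j) := (u(i), u(j))$. Since $(i,j) \ta x \simeq z$ holds iff both $i \tc x \simeq z$ and $j \tc z \simeq x$, the biconditional for $\tc$ transports directly to one for $\ta$, and $u'$ inherits injectivity from $u$.

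For $(2) \Rightarrow (3)$, let $(f, u)$ embed $\tax$ into $\tay$. Viewing $f$ as a map $\bm x \to \can$, the forward direction of the embedding biconditional is precisely the definition of UTI with uniformity function $u$, so \thref{lmc1} applies: either $f$ is constant on $\bm x$, or $f(x) \ge_T x$. Since $f$ is injective and every Turing degree is infinite (prepending $0^n1$ to any fixed element of $\bm x$ yields infinitely many distinct equivalents), constancy is excluded. Hence $f(x) \ge_T x$, and since $f(x) \in \bm y$ this yields $\bm y \ge_T \bm x$. I foresee no genuine obstacle beyond the careful two-directional verification in the construction for $(3) \Rightarrow (1)$.
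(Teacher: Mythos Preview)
Your proof is correct and follows essentially the same cycle and constructions as the paper: the join map $z\mapsto z\oplus y_0$ for $(3)\Rightarrow(1)$, the product $u\times u$ for $(1)\Rightarrow(2)$, and the appeal to \thref{lmc1} applied to the injective (hence non-constant) UTI map $f$ for $(2)\Rightarrow(3)$. Your discussion of the ``subtle point'' in $(3)\Rightarrow(1)$ is a bit more explicit than the paper's, which simply states the clean biconditional $i\odot_T z_1=z_2\iff u(i)\odot_T(z_1\oplus z_3)=z_2\oplus z_3$ for all $z_1,z_2,z_3\in\can$ and leaves the restriction to $\bm x,\bm y$ implicit; but the content is the same.
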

\begin{proof}
$\mathit{\ref{emb}}\implies\mathit{\ref{emb2}}$: if $(f,u)$ is an embedding of $\tcx$ into $\tcy$, then  $(f,u\times u)$ is an embedding of $\tax$ into $\tay$, where $u\times u$ is the map $(i,j)\mapsto(u(i),u(j))$.

$\mathit{\ref{emb2}}\implies\mathit{\ref{red}}$: if $(f,u)$ is an embedding of $\tax$ into $\tay$, then $f$ is an injective (hence non-constant) and UTI function from $\bm x$ to $\bm y$, so we get $\bm x\le_T\bm y$ from \thref{lmc1}.

$\mathit{\ref{red}}\implies\mathit{\ref{emb}}$: choose $y\in\bm y$ and define $f:\bm x\to\bm y,z\mapsto z\oplus y$. Observe that $f$ is injective and its range is indeed included in $\bm y$ because $\bm x\le_T\bm y$. It is easy to see that there is an injective $u:\NN\to\NN$ such that, for all $z_1,z_2,z_3\in\can$ and all $i\in\NN$, we have 
\[
i\odot_T z_1=z_2\iff u(i)\odot_T (z_1\oplus z_3)=z_2\oplus z_3.
\]
Thus, $(f,u)$ is an embedding of $\tcx$ into $\tcy$. 
\end{proof} 

\begin{oss}
The clearness of \thref{thm2} seems rather peculiar of the Turing case: for example, we can formulate a \emph{via} version for arithmetic reducibility and equivalence, too, but the analog of \thref{thm2} would fail in the arithmetic case.
Indeed, taking $g:\can\to\can$ to be a counter-example of the arithmetic uniform Martin's conjecture, part I (proved to exist by Slaman and Steel, see \cite{mss}), we get that there is $z\in\can$ such that, for all $x\ge_A z$, arithmetic reducibility via on $[x]_{\equiv_A}$ is embeddable into arithmetic reducibility via on  $[g(x)]_{\equiv_A}$ even though $x\not\le_A g(x)$.

Indeed, although almost every part of the proof remains valid in the arithmetic case, \thref{complem} does not, as there is no universal arithmetic reduction. Hence, the best that we can get from the analog of \thref{lmc1} is that every non-constant uniformly arithmetically invariant $f:[x]_{\equiv_A}\to\can$ satisfies $x\le_T \aj{f(x)}$. However, this is not enough to characterize those pairs of arithmetic degrees $(\bm x,\bm y)$ such that there is an embedding (or a homomorphism) from $\bm x$ to $\bm y$.
\end{oss}

\subsection{Reducing $\le_T$ to computable reducibility}\label{sec5}

Recall that, given two binary relations $R$ and $S$ on sets $X$ and $Y$ respectively, a \textbf{homomorphism} from $R$ to $S$ is a function $f:X\to Y$ such that 
\[
x\mathbin R y\implies f(x)\mathbin S f(y),\quad\forall x,y\in X.
\]
Furthermore, such $f$ is a \textbf{reduction} if
\[
x\mathbin R y\iff f(x)\mathbin S f(y),\quad\forall x,y\in X.
\]
When $X$ and $Y$ are standard Borel spaces and there is a Borel reduction from $R$ to $S$, one says that $R$ is \textbf{Borel reducible} to $S$, and writes $R\le_B S$. On the other hand, when $X=Y=\NN$ and there is a computable reduction from $R$ to $S$, one says that $R$ is \textbf{computably reducible} to $S$, and writes $R\le_c S$.
When $R\le_c S$ and $S\le_c R$, one says that $R$ and $S$ are \textbf{computably bi-reducible}, and writes $R\sim_c S$. Analogously, one defines Borel bi-reducibility $\sim_B$ as the symmetrization of $\le_B$.

Such reducibility notions are well-established tools to compare the complexity of equivalence relations, and thus measure the difficulty of the classification problems that equivalence relations embody (see, for example, \cite{Gao} for $\le_B$ and \cite{coskey} for $\le_c$). Borel reducibility is frequently used to compare quasi-orders, too.

Computable reducibility and bi-reducibility are themselves a Borel quasi-order and a Borel equivalence relation respectively, whether they considered on the Polish space $\ca$ of all binary relations, or they are considered on the closed subset (hence, Polish space itself) $\er\subseteq\ca$ of all equivalence relations on $\NN$. For the rest of the paper, we refer to $\le_c$ and $\sim_c$ as being defined on $\er$.

\begin{defi}
When a Borel quasi-order (resp.\ equivalence relation) has countable downward cones (resp.\ equivalence classes) it is called \textbf{countable Borel quasi-order} (resp.\ \textbf{countable Borel equivalence relation}). 
\end{defi}
These are  well-studied classes of Borel relations (see, for instance, \cite{williams} and \cite{cber}). For instance, $\le_T$ and $\le_c$ are countable Borel quasi-orders and $\equiv_T$ and $\sim_c$ are countable Borel equivalence relations.
As we are going to show, essentially the same argument that led to \thref{lmc1} entails that $\le_T$ is Borel reducible to ${\le_c}$, and hence  $\equiv_T$ is Borel reducible to ${\sim_c}$.

\begin{defi}
For $x\in\can$, define $\ap x$ to be the equivalence relation on $\NN$ given by
\begin{align*}
 i\ap x j \iff  \phi_i^x=\phi_j^x.
\end{align*}
\end{defi}

\begin{thm}\thlabel{redu}
The map $x\mapsto{\ap x}$ is a Borel reduction from $\le_T$ to $\le_c$ (and hence from $\equiv_T$ to $\sim_c$).
\end{thm}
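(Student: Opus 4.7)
The plan is to verify the two defining requirements of a Borel reduction: that $x\mapsto{\ap x}$ is Borel measurable, and that $x\le_T y\iff{\ap x}\le_c{\ap y}$.

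Borel measurability is immediate, because $i\ap x j$ unfolds to $\phi_i^x=\phi_j^x$, a uniformly $\Pi^0_2$ relation in $x$, so the induced map into the Polish space $\er$ is Borel. For the easy direction $x\le_T y\Rightarrow{\ap x}\le_c{\ap y}$, I would fix an index $e_0$ with $\phi_{e_0}^y=x$ and apply the $s$-$m$-$n$ theorem to obtain a total computable $g:\NN\to\NN$ with $\phi_{g(i)}^y\simeq\phi_i^{\phi_{e_0}^y}=\phi_i^x$; then $i\ap x j\iff g(i)\ap y g(j)$, so $g$ reduces $\ap x$ to $\ap y$.

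For the nontrivial direction ${\ap x}\le_c{\ap y}\Rightarrow x\le_T y$, I would adapt the column-row trick from the proof of \thref{lmc1}. Given a computable reduction $f:\NN\to\NN$ and assuming $x\ne\underline 0$ (else $x$ is computable), I pick a computable $r:\NN\to\NN$ together with fixed indices $e,c$ such that $\phi_e^x=x$, $\phi_c^x=\underline 0$, and $\phi_{r(n)}^x$ equals $x$ or $\underline 0$ according as $x(n)=1$ or $0$. Then $r(n)\ap x e$ iff $x(n)=1$ and $r(n)\ap x c$ iff $x(n)=0$, so by the reduction property $\phi_{f(r(n))}^y$ equals one of two distinct partial $y$-computable functions $P_1:=\phi_{f(e)}^y$ or $P_0:=\phi_{f(c)}^y$, the choice being determined by $x(n)$. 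For any $k$ at which $P_0(k),P_1(k)$ are both defined and distinct, the sequence $n\mapsto\phi_{f(r(n))}^y(k)=P_{x(n)}(k)$ is $y$-computable and equals $x$ or its pointwise complement, yielding $x\le_T y$; this is the direct analogue of the column-row argument in \thref{lmc1}, and \thref{complem} provides the $\le_T y$ conclusion when one prefers to work with the full join $\bigoplus_n\phi_{f(r(n))}^y\in\can$.

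The main obstacle I anticipate is partiality: if $P_0,P_1$ differ only through their domains, no such $k$ exists and the above argument stalls. I would handle this by a semidecidability analysis. Any $k\in\dom P_1\setminus\dom P_0$ supplies a $\Sigma_1^y$ test equivalent to ``$x(n)=1$'' (via convergence of $\phi_{f(r(n))}^y(k)$), and symmetrically for a $k'\in\dom P_0\setminus\dom P_1$; if both types of witness exist, both truth-values of $x(n)$ are $\Sigma_1^y$ and $x\le_T y$. If instead the domains strictly nest---say $\dom P_0\subsetneq\dom P_1$---I exploit the freedom to vary the ``alternative'' element: replace $\underline 0$ by any $z\in\can$ with $z\le_T x$, $z\ne x$, and consider the corresponding $P_0^{(z)}:=\phi_{f(c_z)}^y$ for a suitable index $c_z$ uniformly computed from $z$. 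The resulting family $\{P_0^{(z)}\}_z$ is pairwise distinct by injectivity of the reduction on equivalence classes; either some choice produces a pair with a joint-defined distinguishing point or two-way domain difference (reducing to the previous analysis), or the assignment $z\mapsto\dom P_0^{(z)}$ injects the lower Turing cone of $x$ into the $y$-c.e.\ proper subsets of $\dom P_1$, and combining domain witnesses across different $z$'s still permits computing $x$ from $y$.
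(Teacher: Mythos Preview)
Your Borel-measurability check and the forward direction are fine and match the paper (the paper writes the reduction as $i\mapsto i*_T k$, which is your $g$ from the $s$-$m$-$n$ theorem). The column-row idea for the backward direction is also the right one. The gap is entirely in your handling of partiality.

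Concretely: once you have $P_0\ne P_1$ you split into ``both defined and distinct at some $k$'' versus ``domains differ''. The first case is fine. In the second, your plan to vary the alternative $z$ and study the family $\{P_0^{(z)}\}$ does not lead to a proof: if every $\dom P_0^{(z)}\subsetneq\dom P_1$, you still only get one-sided $\Sigma_1^y$ information about $x(n)$, and the sentence ``combining domain witnesses across different $z$'s still permits computing $x$ from $y$'' is not an argument --- there is no uniform way to know \emph{which} $z$ to consult for a given $n$, and injecting the lower cone of $x$ into $y$-c.e.\ subsets of $\dom P_1$ does not by itself compute $x$ from $y$. (Also, your parenthetical appeal to \thref{complem} via $\bigoplus_n\phi_{f(r(n))}^y\in\can$ is invalid here: the columns $P_0,P_1$ may be partial, so that join need not be in $\can$.)

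The paper avoids this entire case analysis with a single trick you are one step away from. From $P_0\ne P_1$ you only need that for \emph{some} $k$ one of $P_0(k),P_1(k)$ is defined, say equal to $m$, while the other is either undefined or $\ne m$. Now double up $r$: define $r(2n),r(2n+1)$ so that $\phi_{r(2n)}^x,\phi_{r(2n+1)}^x$ are $\phi_a^x,\phi_b^x$ in one order if $x(n)=1$ and in the other order if $x(n)=0$. Then for each $n$, exactly one of $\phi_{v(r(2n))}^y(k)$ and $\phi_{v(r(2n+1))}^y(k)$ converges to $m$; run both in parallel with oracle $y$ and read off $x(n)$ from \emph{which side} outputs $m$ first. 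This simultaneously handles the ``defined-but-different'' and the ``one-undefined'' situations, so no nesting analysis or variation over $z$'s is needed.
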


\begin{proof}
If $x\le_T y$, say $x=\phi_k^y$, then recall the definition of $*_T$ from the proof of \thref{comvar} and note that
\[
i\ap x j\iff i \ap{\phi_k^y} j \iff (i*_T k)\ap y (j*_T k),
\]
so the map $i\mapsto i*_T k$ is a computable reduction from $\approx_T^x$ to $\approx_T^y$.

Vice versa, suppose $(\ap x)\le_c(\ap y)$ as witnessed by the computable reduction $v$. We exploit the same idea as in \thref{lmc1}. Choose any two $a,b\in\NN$ such that $a\not\ap x b$ and hence, since $v$ is a reduction,  $v(a)\not\ap y v(b)$. This means there is some $k$ such that $\phi_{v(a)}^y(k)\not\simeq\phi_{v(b)}^y(k)$. Thus, $\phi_{v(a)}^y(k)$ and $\phi_{v(b)}^y(k)$ cannot be both undefined, so suppose, for example, that $\phi_{v(a)}^y(k)$ is defined and equals, say, $m$; then, whether is defined or not,  $\phi_{v(b)}^y(k)$ does not equal $m$. Take now a computable function $r$ such that, for all $n$,

\begin{align*}
\phi_{r(2n)}^x=
\begin{cases}
\phi_a^x&\text{if $x(n)=1$}\\
\phi_b^x&\text{if $x(n)=0$}
\end{cases}
\qquad
\phi_{r(2n+1)}^x=
\begin{cases}
\phi_b^x&\text{if $x(n)=1$}\\
\phi_a^x&\text{if $x(n)=0$.}
\end{cases}
\end{align*}
Using the fact that $v$ is a reduction (in particular, a homomorphism), we get

\begin{align*}
\phi_{v(r(2n))}^y=
\begin{cases}
\phi_{v(a)}^y&\text{if $x(n)=1$}\\
\phi_{v(b)}^y&\text{if $x(n)=0$}
\end{cases}
\qquad
\phi_{v(r(2n+1))}^y=
\begin{cases}
\phi_{v(b)}^y&\text{if $x(n)=1$}\\
\phi_{v(a)}^y&\text{if $x(n)=0$.}
\end{cases}
\end{align*}
Now, to know if $x(n)$ equals $1$ or $0$, it suffices for $y$ to parallel compute $\phi_{v(r(2n))}^y(k)$ and $\phi_{v(r(2n+1))}^y(k)$ and wait for $m$ to come out as the output of either computation. If $m$ comes from $\phi_{v(r(2n))}^y(k)$, then $x(n)=1$, otherwise, if it comes from $\phi_{v(r(2n+1))}^y(k)$, then $x(n)=0$.

Since $v$ and $r$ are computable and there exists a universal oracle Turing machine, the function $n\mapsto\phi_{v(r(n))}^y$ is computable in $y$, and hence the procedure above describes a program that computes $x$ from $y$.

Thus, we have
\[
x\le_T y\iff (\ap x)\le_c(\ap y)
\]
and the Borelness of the map is clear.
\end{proof}

\begin{oss}
The connection between \thref{lmc1} and \thref{redu} is the following. If we examine the proof of the former, we can observe that it holds not only when $f:[x]_{\equiv_T}\to\can$ is UTI, but it suffices that $f$ admits a \emph{computable} function $u$ such that
\[
f((i,j)\ta x)=u(i,j)\ta f(x),\text{ for all $(i,j)$ s.t.\ $(i,j)\ta x$ is defined.}
\]
Note that such $u$ need not be a uniformity function for $f$, as the previous formula need not hold for all elements of $[x]_{\equiv_T}$, but just for $x$.

On the other hand, if we define  $\dt x=\set{\phi_e^x|e\in\NN}$, then a homomorphism $v$ from $\ap x$ to $\ap y$ defines a function $f:(\dt x)\to (\dt y)$ by
\[
f(\phi_e^x)=\phi_{v(e)}^y,\quad\forall e\in\NN
\]
and vice versa. When, $v$ is not just a homomorphism, but a reduction, then $f$ is injective, in particular non-constant. Thus, we can view the proof that $(\ap x)\le_c(\ap y)$ implies $x\le_T y$ as an argument in the style of \thref{lmc1} applied to such $f$.
\end{oss}
\begin{oss}
In \cite{coskey}, the authors indicated a way to turn an equivalence relation $E$ on $\can$ to an equivalence relation $E^\ce$ on $\NN$, defined by
\[
i\mathbin{E^\ce}j\iff W_i\mathbin E W_j,
\]
where $W_i$ denotes the $i$-th computably enumerable set, i.e.\ $\dom(\phi_i)$. In particular, they studied $=^\ce$. Of course, the same process can be done  relative to any oracle $x\in\can$: we could define
\[
i\mathbin{E^\cee x}j\iff W_i^x\mathbin E W_j^x.
\]
Then, it is easy to see that $(=^\cee x)\sim_c(\ap x)$ for all $x$, so the map $x\mapsto(=^\cee x)$ is another Borel reduction from $\le_T$ to $\le_c$. 

It would be interesting to understand the behavior of the map $T$ that takes a countable Borel equivalence relation $E$ to the Borel equivalence relation $T(E)$ that makes the map $x\mapsto E^\cee x$ a reduction from $T(E)$ to $\sim_c$.
\end{oss}
In the theory of countable Borel equivalence relations, a fundamental result by Adams and Kechris revealed the intricacy of the structure of $\le_B$ on countable Borel equivalence relations. 
\begin{thm}[Adams-Kechris, \cite{ak:2000}]
The  partial  order  of  Borel  sets  under  inclusion  can be embedded in the quasi-order of Borel reducibility of countable Borel equivalence relations, i.e., there is a map $A\mapsto E_A$ from the Borel subsets of $\RR$ to countable Borel equivalence relations such that $A\subseteq B\iff E_A\le_B E_B$. In particular it follows that any Borel partial order can be embedded in the quasi-order of Borel reducibility of countable Borel equivalence relations.
\end{thm}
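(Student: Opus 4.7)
The plan is to parametrize countable Borel equivalence relations by reals so that Borel reducibility among them mirrors set-theoretic inclusion. The main tool will be a superrigidity theorem (Zimmer cocycle superrigidity, or its orbit-equivalence avatars due to Furman and Adams) for actions of higher-rank lattices such as $\operatorname{SL}_n(\ZZ)$, $n\ge 3$. I would fix such a lattice $\Gamma$ together with a Borel family $\{\alpha_r\}_{r\in\RR}$ of free, ergodic, measure-preserving actions of $\Gamma$ on standard probability spaces $X_r$, chosen so that the corresponding orbit equivalence relations $F_r$ are pairwise incomparable under $\le_B$; concrete candidates are Bernoulli-like shifts $Y_r^\Gamma$ with base spaces $Y_r$ carrying enough distinguishing invariants (entropy, cost, or $\ell^2$-Betti-type data).

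Given a Borel set $A\subseteq\RR$, I would define
\[
E_A=\bigsqcup_{r\in A}F_r
\]
on $\bigsqcup_{r\in A}X_r$, realised as a Borel subset of $\RR\times Z$ inside a universal standard Borel space $Z$, so that the assignment $A\mapsto E_A$ is itself Borel. This is a countable Borel equivalence relation since each $F_r$ is, and the forward direction $A\subseteq B \Rightarrow E_A\le_B E_B$ is immediate: $E_A$ is literally the restriction of $E_B$ to the invariant Borel set $\bigsqcup_{r\in A}X_r$, and inclusion is a Borel reduction.

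The hard direction, $E_A\le_B E_B \Rightarrow A\subseteq B$, is where the rigidity is used. A Borel reduction $f:E_A\to E_B$ sends each fiber $X_r$ (with $r\in A$) into an $E_B$-invariant Borel set, and by a Fubini/ergodicity argument its essential image concentrates in a single $X_s$ with $s\in B$; cocycle superrigidity then forces $F_r\sim_B F_s$, which by the construction of the family $\{F_r\}$ means $s=r$, so $r\in B$. The main obstacle is precisely this rigidity step: one must both control any Borel reduction measure-theoretically so that a single $F_r$-orbit cannot be spread across many fibers $X_s$, and guarantee that distinct parameters really yield $\le_B$-incomparable $F_r$'s, which is the delicate ergodic-theoretic input from the theory of higher-rank lattice actions. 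Assembling the family uniformly so that $A\mapsto E_A$ is globally Borel, rather than merely defined pointwise, is a further but essentially routine technicality.
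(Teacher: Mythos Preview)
The paper does not prove this theorem; it is quoted, with attribution to Adams and Kechris, purely as background motivating the corollaries about computable reducibility, and no argument is supplied. There is therefore no proof in the paper to compare your proposal against.

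That said, your sketch is broadly in the spirit of the actual Adams--Kechris argument: the decisive input is indeed Zimmer-type cocycle superrigidity for higher-rank lattices, used to manufacture a continuum-sized family of mutually rigid orbit equivalence relations, and the assignment $A\mapsto E_A$ is realized essentially as a disjoint union over the parameters in $A$. The place where your outline is thinnest is the backward direction. To pass from a Borel reduction $E_A\le_B E_B$ to the conclusion that each fiber $X_r$ is carried into a single $X_s$, ordinary ergodicity is not enough, since a Borel reduction carries no a priori measure-theoretic constraint; one needs a strong-ergodicity (e.g.\ $E_0$-ergodicity) statement to force the target-fiber map to be essentially constant. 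Only then does pairwise non-reducibility of the $F_r$'s, itself a consequence of superrigidity, pin down $s=r$ and hence $r\in B$. You correctly flag this rigidity step as the main obstacle, but the passage from ``Borel reduction'' to ``measure-theoretic control on each fiber'' deserves to be stated as a genuine lemma rather than folded into a Fubini remark.
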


This theorem disclosed at once many features of $\le_B$ on countable Borel equivalence relations, like --- for instance --- that it features antichains of size $2^{\aleph_0}$ and chains of size $\aleph_1$.

\thref{redu} can be viewed as something similar for the theory of equivalence relations on $\NN$. Indeed, we know from computability theory that there are many orders that we can embed into the Turing degrees. 
\begin{thm}[Sacks, \cite{sacks:1961}]
Every partial order of cardinality $\le\aleph_1$ in which every downward cone is countable can be embedded into the Turing degrees.
\end{thm}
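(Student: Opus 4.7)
The plan is to prove Sacks' theorem by transfinite recursion of length $\omega_1$, building an increasing chain of embeddings from countable downward-closed sub-posets of $P$ into $\mathcal{D}$ and taking the union at the end. First I would enumerate $P = \{p_\alpha : \alpha < \omega_1\}$ and set $P_\alpha$ to be the downward closure in $P$ of $\{p_\beta : \beta < \alpha\}$. Because every downward cone in $P$ is countable and $\alpha$ is a countable ordinal, $P_\alpha$ is a countable union of countable sets, hence countable; moreover $P_\alpha$ is downward closed in $P$, the chain $(P_\alpha)_{\alpha<\omega_1}$ is increasing, and its union is all of $P$.

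Next I would construct embeddings $e_\alpha : P_\alpha \to \mathcal{D}$ so that $e_\beta$ extends $e_\alpha$ whenever $\alpha \le \beta$. Limit stages are free: set $e_\alpha = \bigcup_{\beta < \alpha} e_\beta$. The real work is the successor step, which reduces to the following countable lemma: given a countable poset $Q$ with an embedding $e : Q \to \mathcal{D}$ realized by chosen reals $(a_q)_{q \in Q}$ with $a_q \in e(q)$, and a countable extension $Q \subseteq P'$ with $P'\setminus Q$ countable, one can pick reals $(x_r)_{r \in P' \setminus Q}$ such that $e'(r) = [x_r]_{\equiv_T}$ together with $e$ gives an embedding of $P'$. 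This is a classical Kleene--Post style finite-extension construction.

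For that construction, I would reserve for each new element $r \in P' \setminus Q$ a family of coding columns of $x_r$: one column for each $p \le r$ in $P'$ with $p \ne r$, used to copy $a_p$ when $p \in Q$ and to copy the initial segment of $x_p$ built so far when $p \in P' \setminus Q$; plus at least one further free column whose bits are kept available for diagonalization. The positive requirements $a_p \le_T x_r$ and $x_p \le_T x_r$ for $p \le r$ in $P'$ are then satisfied automatically. The negative requirements, for each $p \in P'$ with $p \not\le r$ and each Turing functional $\Phi_e$, demand $\Phi_e^{x_r} \ne a_p$ (or $\ne x_p$); one handles them one at a time by extending the current finite approximation of $x_r$ on its free column so as to force either a disagreement with the target real or divergence of $\Phi_e^{x_r}$ at some position. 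Consistency holds because when $p \not\le r$ no bits of $a_p$ (or $x_p$) are copied into the coding columns of $x_r$, so free-column extensions never jeopardize any positive requirement.

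The main obstacle is the bookkeeping in this simultaneous finite-extension construction: the reals $x_r$ are built in parallel, negative requirements on $x_r$ may refer to other $x_{r'}$ still under construction, and the coding columns weave Turing reductions between the $x_r$'s that must be tracked to certify that no accidental reduction $a_p \le_T x_r$ arises for $p \not\le r$. A standard priority enumeration of (requirement, stage) pairs, combined with a disjoint allocation of coding and free columns for each new real, handles this. Once the countable successor lemma is in place, the transfinite recursion outlined above produces the desired global embedding of $P$ into $\mathcal{D}$.
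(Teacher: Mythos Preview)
The paper does not prove this theorem at all: it is quoted as a classical result of Sacks (the citation \cite{sacks:1961}) and used as a black box to derive the corollary about $(\er/{\sim_c},\le_c)$. So there is no ``paper's own proof'' to compare against.

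Your sketch is the standard route to Sacks' theorem (transfinite recursion along an $\omega_1$-enumeration, with a Kleene--Post finite-extension argument at successor stages), and it is essentially correct. One omission worth flagging: your list of negative requirements only covers one direction. You ensure, for each new $r$ and each $p\not\le r$, that $\Phi_e^{x_r}\ne a_p$ (or $\ne x_p$), i.e.\ that $x_r$ does not compute things not below $r$. You also need the converse for the already-placed reals: for every old $q\in Q$ (necessarily $r\not\le q$, since $Q$ is downward closed and $r\notin Q$) and every $e$, you must ensure $x_r\ne\Phi_e^{a_q}$, so that the fixed $a_q$ does not accidentally compute the new $x_r$. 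This is handled by the same finite-extension mechanism (pick a free-column bit of $x_r$ to disagree with $\Phi_e^{a_q}$ if the latter converges there), but it should be listed among the requirements; otherwise the extended map $e'$ need not be an embedding.
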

\begin{coroll}
Let $\er$ be the set of equivalence relations on $\NN$. Every partial order of cardinality $\le\aleph_1$ in which every downward cone is countable can be embedded into $(\er/{\sim_c},{\le_c})$.
\end{coroll}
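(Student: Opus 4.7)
The plan is to simply compose the two results just recalled. By Sacks' theorem, given a partial order $P$ of cardinality at most $\aleph_1$ with countable downward cones, there is an embedding $\iota: P \to (\mathcal D, \le_T)$ of partial orders. For each $p \in P$, pick a representative $x_p \in \can$ of the Turing degree $\iota(p)$, so that $p \le_P q \iff x_p \le_T x_q$ and $p = q \iff x_p \equiv_T x_q$ (using that $\iota$ is an embedding and $p \mapsto x_p$ respects this).

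Next, I would compose this with the Borel reduction of $\le_T$ to $\le_c$ provided by \thref{redu}: define $\Phi : P \to \er/{\sim_c}$ by $\Phi(p) = [{\ap{x_p}}]_{\sim_c}$. By \thref{redu} applied to the pair $(x_p, x_q)$, we have
\[
p \le_P q \iff x_p \le_T x_q \iff (\ap{x_p}) \le_c (\ap{x_q}) \iff \Phi(p) \le_c \Phi(q),
\]
and analogously, from the ``hence'' part of \thref{redu}, $p = q \iff x_p \equiv_T x_q \iff (\ap{x_p}) \sim_c (\ap{x_q}) \iff \Phi(p) = \Phi(q)$. Thus $\Phi$ is a well-defined, injective embedding of partial orders from $P$ into $(\er/{\sim_c}, \le_c)$.

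There is no real obstacle beyond checking that the composition works, since both ingredients are already in hand: Sacks' theorem supplies the embedding into $(\mathcal D, \le_T)$, and \thref{redu} transports it faithfully into $(\er/{\sim_c}, \le_c)$. The only minor point to be careful about is that the choice of representatives $x_p$ is immaterial, because $\approx_T^x$ depends on $x$ only up to $\sim_c$ by (the easy direction of) \thref{redu}; so the class $\Phi(p)$ is genuinely a function of $p$ alone.
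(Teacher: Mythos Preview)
Your proposal is correct and matches the paper's intent exactly: the paper states the corollary without proof, placing it immediately after Sacks' theorem and \thref{redu}, so the intended argument is precisely the composition you spell out. Your additional remark about independence from the choice of representatives is a nice touch but not strictly needed, since one can just map $P$ into $\er$ via $p\mapsto{\ap{x_p}}$ and observe that this induces a well-defined embedding on the quotient.
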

We also know that there are antichains of Turing degrees of size $2^{\aleph_0}$ (for example, that given by minimal Turing degrees).
\begin{coroll}
There are $2^{\aleph_0}$ equivalence relations on $\NN$ that are mutually $\le_c$-incomparable.
\end{coroll}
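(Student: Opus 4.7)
The plan is to transport a known antichain in the Turing degrees through the Borel reduction $x\mapsto\ap x$ established in \thref{redu}. The paragraph preceding the corollary already recalls that there exist antichains of $2^{\aleph_0}$ pairwise Turing-incomparable reals (for instance, a family of minimal Turing degrees), so the only remaining task is to check that $x\mapsto\ap x$ preserves both incomparability and distinctness on such a family.

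Concretely, I would fix a family $\set{x_\alpha|\alpha<2^{\aleph_0}}\subseteq\can$ of pairwise $\le_T$-incomparable reals and set $E_\alpha=\ap{x_\alpha}$. For any $\alpha\ne\beta$, \thref{redu} gives the biconditional
\[
E_\alpha\le_c E_\beta \iff x_\alpha\le_T x_\beta
\]
and, by symmetry, the analogous statement with the roles of $\alpha$ and $\beta$ swapped. Since both $x_\alpha\not\le_T x_\beta$ and $x_\beta\not\le_T x_\alpha$ hold by choice of the antichain, one reads off directly that $E_\alpha\not\le_c E_\beta$ and $E_\beta\not\le_c E_\alpha$. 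In particular $E_\alpha\not\sim_c E_\beta$, so $\set{E_\alpha|\alpha<2^{\aleph_0}}$ furnishes $2^{\aleph_0}$ genuinely distinct equivalence relations on $\NN$ that are pairwise $\le_c$-incomparable, which is exactly the statement to prove.

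There is essentially no obstacle in this argument: all the content sits inside \thref{redu}, and the existence of a Turing antichain of full cardinality is a classical fact invoked just before the statement of the corollary. The proof is a pure black-box application of the reduction, which is exactly why the corollary is listed as an immediate consequence of \thref{redu} rather than being treated as a separate result.
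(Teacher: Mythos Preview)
Your proposal is correct and matches the paper's approach exactly: the corollary is stated without proof in the paper, immediately after recalling that minimal Turing degrees give a $2^{\aleph_0}$-sized Turing antichain, so the intended argument is precisely the black-box application of \thref{redu} that you wrote out.
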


\section{Part II, locally?}
After showing that part I of uniform Martin's conjecture is the consequence of a local phenomenon, it comes natural to ask whether this is also the case for part II.

In \cite{becker}, Becker reproved part II of uniform Martin's conjecture in a particularly perspicuous way: he used the descriptive set-theoretic notion of \virg{reasonable pointclass} and proved that, under \ax{AD}, every UTI $f>_M\id_{\can}$ is Turing equivalent on a cone to a $\Gamma$-jump operator
\[
J_\Gamma:x\mapsto\text{a universal $\Gamma(x)$ subset of $\NN$}
\]
for some reasonable pointclass $\Gamma$. Reasonable pointclasses are indeed lightface pointclasses that can be relativized to arbitrary $x\in\can$ and admit universal sets. For example, the Turing jump $x\mapsto x'$ is a $\Sigma^0_1$-jump operator, the relativization of Kleene's $\Oo$, $x\mapsto\Oo^x$, is a $\Pi^1_1$-jump operator, and so on.

Part II of uniform Martin's conjecture then follows from the link between the ordering $\le_M$ on pointclass jump operators and Wadge reducibility $\le_W$ on $\can$. Recent work by Kihara and Montalb\'an \cite{kihara} improved Becker's result, pushing even further this connection.

Thus, we might ask whether these results arise locally. In fact, Becker's theorem, and \emph{a fortiori} Kihara and Montalb\'an's, tell us that, up to Turing equivalence on a cone, there exist no other UTI functions besides constant functions, identity function and pointclass jump operators (under \ax{AD}), so it is natural to ask whether any UTI functions that have nothing to do with constant functions, identity function and pointclass jump operators can exist locally.

\begin{question}
Fix a Turing degree $\bm x$, and consider the smallest family $\mathcal J_{\bm x}$ of functions $f:\bm x\to\can$ that contains
\begin{itemize}
        \item all constant functions from $\bm x$ to $\can$
        \item $\id_{\bm x}$ and $\bm x\ni x\mapsto \bar x$, where $\bar x:i\mapsto\bigl(1-x(i)\bigr)$
        \item all pointclass jump operators defined on $\bm x$
\end{itemize}
and such that, if $f_0, f_1,\dots$ are in $\mathcal J_{\bm x}$, then $f_0\oplus f_1$, $\bigoplus_n f_n$ and $(i,j)\ta f_0$ are in $\mathcal J_{\bm x}$, too, for all $(i,j)$ such that $(i,j)\ta f_0(x)$ is defined for each $x\in\bm x$.\footnote{Of course, these operations are meant to be pointwise, e.g.\ $(f_0\oplus f_1)(x)=f_0(x)\oplus f_1(x)$.}
Every function in $\mathcal J_{\bm x}$ is UTI. Is $\mathcal J_{\bm x}$ the set of \emph{all}  UTI functions from $\bm x$ to $\can$?
\end{question}

\printbibliography
\end{document}